\def\ie{{i.e.}\kern.3em}
\def\Ie{{I.e.}\kern.3em}
\def\cf{{cf.}\kern.3em}
\def\Cf{{Cf.}\kern.3em}
\def\loccit{\textit{loc.\kern2pt cit.}\xspace}
\def\resp{resp.\kern.3em}
 \let\NN\bbN
 \let\ZZ\bbZ
\def\calL{\mathscr{L}}
\def\SL{\mathrm{SL}}
\def\GL{\mathrm{GL}}
\DeclareMathOperator{\ch}{ch}
\newtheorem{lemma}{Lemma}[subsection] 
\newtheorem{cor}[lemma]{Corollary}
\newtheorem{prop}[lemma]{Proposition} 
\theoremstyle{definition}
\newtheorem{rem}[lemma]{Remark}
\numberwithin{equation}{subsection}
\begin{document} 

{\hfuzz=6.5pt \vfuzz=4pt

\title[Line bundles over certain flag schemes]{On the cohomology of line bundles over certain flag schemes II}

\author{Linyuan Liu and Patrick Polo}

\address{Institut de Mathématiques de Jussieu-Paris Rive Gauche\\
Sorbonne Université -- Campus Pierre et Marie Curie\\
4, place Jussieu -- Boîte Courrier 247\\
F-75252 Paris Cedex 05\\
France}

\email{linyuan.liu@imj-prg.fr \quad patrick.polo@imj-prg.fr}

\subjclass{05E05, 05E10, 14L15, 20G05} 

\keywords{cohomology, line bundles, flag schemes, Weyl modules, symmetric functions} 

\dedicatory{\it To Jens Carsten Jantzen on the occasion of his 70th birthday}

\begin{abstract} 
Over a field $K$ of characteristic $p$, let $Z$ be the incidence variety in $\mathbb{P}^d \times (\mathbb{P}^d)^*$ and let $\mathscr{L}$ 
be the restriction to $Z$ of the line bundle $\mathscr{O}(-n-d) \boxtimes \mathscr{O}(n)$, where $n = p+f$ with $0 \leq f \leq p-2$. 
We prove that $H^d(Z,\mathscr{L})$ 
is the simple $\operatorname{GL}_{d+1}$-module corresponding to the partition $\lambda_f = (p-1+f,p-1,f+1)$. 
When $f= 0$, using the first author's description of $H^d(Z,\mathscr{L})$ and Jantzen's sum formula, we obtain 
as a by-product that the sum of the monomial symmetric functions $m_\lambda$, 
for all partitions $\lambda$ of $2p-1$ less than $(p-1,p-1,1)$ in the dominance order, is the alternating sum of the Schur functions $S_{p-1,p-1-i,1^{i+1}}$ for $i=0,\dots,p-2$.  
\end{abstract}

\maketitle

\setcounter{section}{1} 

\subsection*{Introduction} This paper is an addition to the first author's paper \cite{Liu19b}. 
We now consider the group scheme $G=\SL_{d+1}$ over an arbitrary field $K$ of characteristic $p > 0$. 
For $m,n\in \NN$, set $\mu_{m,n} = m\omega_1 - (n+d)\omega_d$ and write simply $\mu_n$ instead of 
$\mu_{n,n}$. We describe the $H^i(\mu_{m,n})$ when $p>n$ (thus recovering and extending \cite{Liu19b}, Cor.~2)
and for $n = p+f$ with $0\leq f \leq p-2$ we prove that $H^d(\mu_n)$ is the simple $G$-module 
$L(\lambda_f) = L(f\omega_1 + (p-2-f)\omega_2 + (f+1)\omega_3)$. 
Further, when $f=0$ we express the character of $L(\lambda_f) $, using Jantzen sum formula, as an alternating sum of Weyl characters. 
Comparing this with the character of $H^d(\mu_p)$ given in \cite{Liu19b}, Cor.~3, 
we obtain as a by-product that 
the sum of the monomial symmetric functions $m_\lambda$, 
for all partitions $\lambda$ of $2p-1$ less than $\lambda_f$ in the dominance order, 
is the alternating sum of the Schur functions $S_{p-1,p-1-i,1^{i+1}}$ for $i=0,\dots,p-2$.

\subsection{Notation} We keep the notation of \cite{Liu19b}, except that we now consider the group scheme $G=\SL_{d+1}$ over an arbitrary field $K$ of characteristic $p > 0$ and denote simply by $P$ the maximal parabolic subgroup $P_1$. 
Let $W_P$ be the Weyl group of $(P,T)$ and define $W_Q$ similarly. 
Let $\ch V$ denote the character of a $T$-module $V$. 
For each simple root $\alpha_i$, let $s_i\in W$ be the corresponding simple reflection. 
Let $w_0$ (\resp $w_P$, \resp $w_Q$) be the longest element 
of $W$ (\resp $W_P$, \resp $W_Q$) and set $N = \ell(w_Q)$. Then $\ell(w_P) = N + d-1$. 

Let $\rho_Q$ (\resp $\rho_P$) denote the half-sum of the positive roots of $Q$ (\resp $P$). 
Then one has:  

\begin{equation} 
2\rho_Q = (2-d)(\omega_1 + \omega_d) + 2 \sum_{i=2}^{d-1} \omega_i, \qquad \qquad 
2\rho_P = (1-d) \omega_1 + 2 \sum_{i=2}^{d} \omega_i .
\end{equation} 

Recall that, since $2\rho-2\rho_Q = d(\omega_1+\omega_d)$, the dualizing sheaf on $Z=G/Q$ is $\calL(-d\omega_1 -d\omega_d)$. 
Hence, by Serre duality on $G/Q$, one has 
\begin{equation}\label{SerreQ} 
H^d(\mu_{m,n}) \simeq H^{d-1}(-(m+d)\omega_1 + n \omega_d)^*. 
\end{equation} 
Further, let $\tau$ be the (involutive) automorphism of $(G,T)$ induced by the automorphism of the Dynkin diagram which swaps 
$\alpha_i$ and $\alpha_{d+1-i}$ for $i=1,\dots, d$. Note that $\tau$ also acts on $X(T)$ and preserves $X(T)^+$. 
For any $G$-module $V$, let ${}^\tau V$ denote the corresponding module twisted by $\tau$. For example, 
for a Weyl module $V(\lambda)$ (\resp a simple module $L(\lambda)$), one has ${}^\tau V(\lambda) \simeq 
V(\tau\lambda)$ (\resp ${}^\tau L(\lambda) \simeq L(\tau\lambda)$). Then, \eqref{SerreQ} can be rewrited as: 
\begin{equation}\label{symdiag}  
H^d(\mu_{m,n}) \simeq H^{d-1}(\tau\mu_{n,m})^*. 
\end{equation} 

Recall (see \cite{RAG}, II.4.6 and its proof) that if $P' \subset P''$ are parabolic subgroups containing $B$ and if $V$ is a $P'$-module, one has for all $i\geq 0$: 
\begin{equation}\label{Kempf} 
H^i(P''/B, V) \simeq H^i(P''/P', V). 
\end{equation} 
This will be used several times, without always mentioning it. 

\subsection{A description of $H^i(\mu_{m,n})$} 
For each $i\in \NN$, we denote by $H_Q^i({-})$ 
the functor $H^i(Q/B,{-})$ and define $H^i_P({-})$ similarly. Since $-\mu_{m,n}$ is trivial and 
$\mu_{m,n} - 2\rho_Q$ is anti-dominant with respect to the Levi subgroup of $Q$ one has: 
\begin{equation}\label{HQ0}
H^i_Q(-\mu_{m,n}) \simeq  
\begin{cases} 
-\mu_{m,n} & \text{ if } i = 0, 
\\
0 & \text{ if } i> 0 
\end{cases} 
\end{equation} 
and, using Serre duality on $Q/B$: 
\begin{equation}\label{HQN}
H^i_Q(\mu_{m,n} - 2\rho_Q) \simeq 
\begin{cases} 
0 & \text{ if } i < N , 
\\
H^0_Q(-\mu_{m,n})^* \simeq \mu_{m,n} & \text{ if } i = N.  
\end{cases} 
\end{equation} 

\medskip 
Consider induction from $B$ to $P$. Thanks to \eqref{HQN}, the spectral sequence of composite functors 
(see \cite{RAG}, I.4.5) degenerates and gives (together with \eqref{Kempf} applied to $P' = Q$ and $P''=P$) isomorphisms for each $i\geq 0$: 
\begin{equation}\label{N-shift}
H^i_P(\mu_{m,n}) \simeq H^{i+N}_P(\mu_{m,n}-2\rho_Q)  
\end{equation} 
and since $\mu_{m,n}-2\rho_Q = (m+d-2)\omega_1 - 2 \sum_{j=2}^{d-1} \omega_j - (n+2) \omega_d$ is anti-dominant with respect 
to $P$, the latter group is zero unless $i+N = \dim(P/B)$, \ie $i = d-1$. 
Moreover, by Serre duality on $P/B$, one has: 
\begin{align*}
H^{N+d-1}_P(\mu_{m,n}-2\rho_Q) &  \simeq H^0_P( -\mu_{m,n} + 2\rho_Q- 2\rho_P)^* 
\\ 
& = H^0_P((1-m)\omega_1 + n \omega_d)^*. 
\end{align*} 
Set $\nu_{m,n} = (1-m)\omega_1 + n \omega_d$ and $\pi_{m,n} = -w_P \nu_{m,n}$. 
Since $w_P = s_2\cdots s_d w_Q$, one has 
\begin{equation} 
\pi_{m,n} = (m-n-1)\omega_1 + n\omega_2
\end{equation} 
and since $H^0_P(\nu_{m,n})^*$ is isomorphic with $V_P(\pi_{m,n})$, 
the Weyl module for $P$ with highest weight $\pi_{m,n}$, one obtains:  

\begin{lemma} For each $m,n\in \NN$ and $i\in \NN$, one has 
$$
H^i_P(\mu_{m,n}) \simeq 
\begin{cases} 
V_P(\pi_{m,n}) & \text{ if } i = d-1, 
\\
0 & \text{ if } i \not= d-1. 
\end{cases} 
$$
\end{lemma}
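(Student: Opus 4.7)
The plan is essentially to assemble the computation already carried out in this section; all ingredients are in place just above the statement.

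First, I would invoke the shift isomorphism \eqref{N-shift}, which comes from the degeneration of the Grothendieck spectral sequence of composite functors for $B \subset Q \subset P$, the degeneration being forced by \eqref{HQN}. Together with \eqref{Kempf} this yields
\[
H^i_P(\mu_{m,n}) \simeq H^{i+N}_P(\mu_{m,n} - 2\rho_Q).
\]
Since $\mu_{m,n} - 2\rho_Q$ is anti-dominant with respect to the Levi of $P$ (its coefficient on $\omega_j$ for $2 \leq j \leq d$ is negative, as computed), Kempf vanishing applied via Serre duality on $P/B$ forces the right-hand side to vanish unless $i + N = \dim(P/B) = N + d - 1$, that is, unless $i = d-1$. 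This handles the vanishing statement.

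Next, for $i = d-1$, Serre duality on $P/B$ (whose dualizing weight is $-2\rho_P$) gives
\[
H^{N+d-1}_P(\mu_{m,n} - 2\rho_Q) \simeq H^0_P\bigl(-\mu_{m,n} + 2\rho_Q - 2\rho_P\bigr)^*.
\]
A direct substitution using the formulas for $2\rho_P$ and $2\rho_Q$ displayed above shows that the argument on the right simplifies to $(1-m)\omega_1 + n\omega_d = \nu_{m,n}$.

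It then remains to identify $H^0_P(\nu_{m,n})^*$ with $V_P(\pi_{m,n})$. By the standard duality between induced and Weyl modules for $P$, one has $H^0_P(\nu)^* \simeq V_P(-w_P \nu)$. Using $w_P = s_2 \cdots s_d w_Q$, one computes in type $A_d$ that $w_P(\omega_1) = \omega_1$ and $w_P(\omega_d) = \omega_1 - \omega_2$, giving $-w_P \nu_{m,n} = (m-n-1)\omega_1 + n\omega_2 = \pi_{m,n}$. The only point demanding actual care is this final root-system bookkeeping; the rest is direct substitution, and I do not anticipate any serious obstacle.
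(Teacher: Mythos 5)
Your proof is correct and follows essentially the same route as the paper, which establishes the lemma exactly by the chain \eqref{N-shift}, anti-dominance of $\mu_{m,n}-2\rho_Q$ with respect to $P$, Serre duality on $P/B$, and the identification $H^0_P(\nu_{m,n})^* \simeq V_P(-w_P\nu_{m,n}) = V_P(\pi_{m,n})$. Your final computation $w_P(\omega_1)=\omega_1$, $w_P(\omega_d)=\omega_1-\omega_2$ checks out and matches the paper's $\pi_{m,n} = (m-n-1)\omega_1 + n\omega_2$.
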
 

\smallskip 
Now, consider induction from $B$ to $G$. Thanks to the lemma, the spectral sequence of composite functors degenerates and gives 
(together with \eqref{Kempf} applied to $P' = P$ and $P''=G$)
isomorphisms $H^i(\mu_{m,n}) \simeq H^{i-d+1}(V_P(\pi_{m,n}))$ for each $i\geq 0$. Since the former 
is zero for $i\not\in \{d-1, d\}$ (see \cite{Liu19b}, Section 2), this gives: 

\begin{prop}\label{H0-H1}
For each $m,n\in \NN$ and $i\in \NN$, one has 
$$
H^i(\mu_{m,n}) \simeq 
\begin{cases} 
H^0(V_P(\pi_{m,n} )) & \text{ if } i = d-1, 
\\
H^1(V_P(\pi_{m,n} )) & \text{ if } i = d, 
\\
0 & \text{ if } i \not= d-1, d. 
\end{cases} 
$$
\end{prop}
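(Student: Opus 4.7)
The plan is to run the Grothendieck spectral sequence for the composition of inductions $\operatorname{Ind}_B^P$ followed by $\operatorname{Ind}_P^G$, in complete analogy with the argument that produced \eqref{N-shift} from \eqref{HQN}. Concretely, applied to the one-dimensional $B$-module of weight $\mu_{m,n}$, the spectral sequence from \cite{RAG}, I.4.5 reads
$$E_2^{p,q} = H^p(G/P, H^q_P(\mu_{m,n})) \Longrightarrow H^{p+q}(\mu_{m,n}).$$

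By the preceding lemma, the $E_2$-page is concentrated in the single row $q = d-1$, where it reads $E_2^{p,d-1} = H^p(G/P, V_P(\pi_{m,n}))$. The spectral sequence therefore degenerates at $E_2$ and yields, for every $i \geq 0$, isomorphisms
$$H^i(\mu_{m,n}) \simeq H^{i-d+1}(G/P, V_P(\pi_{m,n})) \simeq H^{i-d+1}(V_P(\pi_{m,n})),$$
the second being \eqref{Kempf} applied with $P' = P$ and $P'' = G$. In particular $H^{i-d+1}$ of $V_P(\pi_{m,n})$ is to be interpreted as the cohomology on $G/B$ of the sheaf associated to this $P$-module.

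To reach the stated form of the result it then suffices to invoke the vanishing result from \cite{Liu19b}, Section~2, which asserts that $H^i(\mu_{m,n}) = 0$ whenever $i \notin \{d-1,d\}$. Transported through the isomorphism above, this forces $H^{i-d+1}(V_P(\pi_{m,n})) = 0$ for $i-d+1 \notin \{0,1\}$, and pins down the two surviving degrees as $H^0(V_P(\pi_{m,n}))$ and $H^1(V_P(\pi_{m,n}))$, exactly as claimed. The only step calling for care is the degeneration of the spectral sequence, which is automatic here as the $E_2$-page occupies a single row; otherwise the proof is pure bookkeeping of the same flavour as the derivation of \eqref{N-shift}, and the genuine content of the statement really sits in the preceding lemma rather than in the present proposition.
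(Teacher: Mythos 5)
Your proof is correct and follows essentially the same route as the paper: the composite-functor spectral sequence for induction from $B$ to $P$ to $G$ collapses by the preceding lemma, \eqref{Kempf} with $P'=P$, $P''=G$ identifies the abutment terms, and the vanishing outside degrees $d-1,d$ is imported from \cite{Liu19b}, Section~2, exactly as in the paper.
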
 

This allows us to recover and extend the result of \cite{Liu19b}, Corollary 2: 

\begin{cor}\label{p>n} 
Suppose that $p > n$. 
\begin{enumerate}
\item[(i)] If $m\geq n$, then $H^d(\mu_{m,n}) = 0$ and 
$H^{d-1}(\mu_{m,n}) \simeq H^0(\pi_{m,n})$. 
In particular, $H^i(\mu_n) = 0$ for all $i \geq 0$. 

\smallskip 
\item[(ii)] If $n\geq m$, then $H^{d-1}(\mu_{m,n}) = 0$ and 
$$
H^{d}(\mu_{m,n}) \simeq H^1(\pi_{m,n}) \simeq H^0(\pi_{n,m}) .
$$

\smallskip 
\item[(iii)] Furthermore, if\, $p > m \geq n$ then $H^{d-1}(\mu_{m,n}) \simeq L(\pi_{m,n}) \simeq H^d(\mu_{n,m})$. 
\end{enumerate} 
\end{cor}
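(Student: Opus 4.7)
The plan is to reduce, via Proposition \ref{H0-H1}, to a cohomology computation on $G/B$ for the line bundle $\pi_{m,n}$, and then to exploit Kempf vanishing together with reflection in the simple root $\alpha_1$. The first step is the identification $H^i(V_P(\pi_{m,n})) \simeq H^i(G/B, \pi_{m,n})$ under the hypothesis $p > n$. Indeed, the restriction of $\pi_{m,n}$ to the semisimple part of $L_P \simeq \GL_d$ is the highest weight of $\mathrm{Sym}^n$ of the standard representation, and this module is simple precisely because $n < p$; hence $V_P(\pi_{m,n})$ is $L_P$-irreducible and coincides with the induced module $H^0(P/B, \pi_{m,n})$. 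Since $\pi_{m,n}$ is $L_P$-dominant, Kempf vanishing on $P/B$ kills the higher derived functors, so the composite functor spectral sequence for $B \subset P \subset G$ collapses and, together with \eqref{Kempf}, yields the desired identification.

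The second step splits the computation of $H^i(\pi_{m,n})$ into three subcases according to the sign of $\langle \pi_{m,n}+\rho, \alpha_1^\vee\rangle = m - n$. If $m > n$, then $\pi_{m,n}$ is $G$-dominant and Kempf vanishing on $G/B$ gives $H^{>0}(\pi_{m,n}) = 0$, establishing (i) in this subcase. If $m = n$, the pairing is zero, so $\pi_{n,n}+\rho$ lies on the $\alpha_1$-wall and all $H^i(\pi_{n,n})$ vanish, proving simultaneously the boundary case $m = n$ of (i) and (ii) and the ``in particular'' clause. If $m < n$, the pairing is $\leq -2$ and the spectral sequence for $B \subset P_{\alpha_1} \subset G$, combined with Serre duality on $P_{\alpha_1}/B \simeq \bbP^1$, yields an isomorphism $H^i(\pi_{m,n}) \simeq H^{i-1}(s_1 \cdot \pi_{m,n})$; a short direct computation shows $s_1 \cdot \pi_{m,n} = \pi_{n,m}$, which is then dominant, and a final application of Kempf concludes (ii).

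The main obstacle is part (iii), where the stronger hypothesis $p > m$ should upgrade $H^0(\pi_{m,n})$ to the simple module $L(\pi_{m,n})$. My plan here is to invoke the strong linkage principle: the bound $p > m \geq n$ ensures that $\langle \pi_{m,n}+\rho, \alpha^\vee\rangle \leq p$ for every simple root and for $\alpha_1 + \alpha_2$, so the only admissible affine reflections $s_{\beta, kp} \cdot \pi_{m,n}$ (those satisfying $kp < \langle \pi_{m,n}+\rho, \beta^\vee\rangle$) use positive roots $\beta = \alpha_i + \alpha_{i+1} + \cdots + \alpha_j$ with $j - i$ sufficiently large. A case-by-case check should show that the corresponding reflected weights, after dominantization by the finite Weyl group, always land on a wall, so that no dominant $\mu \neq \pi_{m,n}$ is strongly linked to $\pi_{m,n}$; hence $V(\pi_{m,n}) = L(\pi_{m,n}) = H^0(\pi_{m,n})$. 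The final isomorphism $H^{d-1}(\mu_{m,n}) \simeq H^d(\mu_{n,m})$ then follows from (ii) applied with $m$ and $n$ swapped, and this recovers \cite{Liu19b}, Cor.~2.
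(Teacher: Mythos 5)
Your treatment of (i) and (ii) is essentially the paper's: reduce via Proposition \ref{H0-H1} and the simplicity of $S^nK^d$ (for $n<p$) to $H^i(\mu_{m,n})\simeq H^{i-d+1}(\pi_{m,n})$, then use Kempf vanishing, the $\alpha_1$-wall when $m=n$, and a one-step shift across $\alpha_1$ when $m<n$. One caveat on the shift: the spectral sequence plus Serre duality on $P_{\alpha_1}/B$ by themselves give $H^i(\pi_{m,n})\simeq H^{i-1}\bigl(G/P_{\alpha_1},V_{P_{\alpha_1}}(\pi_{n,m})\bigr)$, i.e.\ the \emph{Weyl} module for the Levi appears, and identifying this with $H^{i-1}(\pi_{n,m})$ needs that Levi module (an $\SL_2$-symmetric power of degree $n-m-1$) to be simple, i.e.\ $n-m\le p$; this is exactly the hypothesis of \cite{RAG} II.5.4, which the paper cites and verifies ($0\le n-m<p$ since $p>n$). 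It holds here, but it is not automatic and should be stated.

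For (iii) you take a genuinely different route. The paper avoids any linkage computation: by (i) and Serre duality on $G/Q$, $H^0(\pi_{m,n})\simeq H^d(\tau\mu_{n,m})^*$, and by (ii) twisted by the diagram automorphism $\tau$ (using $p>m$) this is $H^0(\tau\pi_{m,n})^*\simeq V(\pi_{m,n})$; since a nonzero map $V(\pi_{m,n})\to H^0(\pi_{m,n})$ has image $L(\pi_{m,n})$, simplicity follows. You instead prove simplicity of $H^0(\pi_{m,n})$ directly and then get $H^d(\mu_{n,m})$ from (ii) with the roles of $m,n$ swapped (legitimate, since $p>m$). Two corrections are needed. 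First, the logical bridge is off: showing that every $s_{\beta,kp}\cdot\pi_{m,n}$ with $0<kp<\langle\pi_{m,n}+\rho,\beta^\vee\rangle$ is singular for the finite dot action does \emph{not} show that no dominant weight is strongly linked to $\pi_{m,n}$ (strong-linkage chains may involve several steps and reflections with $kp\le 0$); what it does show, via Jantzen's sum formula, is $\sum_{\ell\ge1}\ch V(\pi_{m,n})_\ell=0$, hence $V(\pi_{m,n})=L(\pi_{m,n})$ and then $H^0(\pi_{m,n})=L(\pi_{m,n})$ by equality of characters --- that is the statement to invoke. Second, the announced case-by-case check is not carried out, though it does go through: with $\pi_{m,n}+\rho=(m-n,n+1,1,\dots,1)$ the only roots $\alpha_{j,k}=\alpha_j+\cdots+\alpha_k$ with pairing $>p$ have $j\le 2$ (and $k$ large) or $j\ge3$ with $k-j\ge p$, and in every case the reflection length $t$ satisfies $1\le t\le k-2$ because $m,n\le p-1$, so the reflected weight plus $\rho$ is orthogonal to $\alpha_{k+1-t,k}^\vee$ --- a miniature of the computation the paper performs later in Proposition \ref{char}. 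The paper's argument buys brevity; yours buys the slightly stronger fact that $H^0((m-n-1)\omega_1+n\omega_2)$ is simple whenever $m,n<p$, independently of the geometry of $Z$.
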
 

\begin{proof} Suppose $p > n$.  
Then $V_P(\pi_{m,n})$ is irreducible as a $P$-module, 
being isomorphic as a module over the Levi subgroup of $P$ 
to the $\GL_d$-module $S^n K^d$ which is irreducible since $n < p$. 
Therefore, $V_P(\pi_{m,n}) \simeq H^0_P(\pi_{m,n})$. 
Thus, the proposition gives for all $i\geq 0$ that 
$$
H^i(\mu_{m,n}) \simeq H^{i-d+1}(\pi_{m,n}).
$$

If $m\geq n$ then $\pi_{m,n}$ belongs to $\mathscr{C} := X(T)^{+}-\rho$ and hence $H^j(\pi_{m,n}) = 0$ for $j > 0$ (and also for $j=0$ if $m=n$). This proves (i). 

If $m <n$, then $H^0(\pi_{m,n}) = 0$. Further, $\pi_{n,m} = (n-m-1)\omega_1 + m \omega_2$ belongs to $\mathscr{C}$ and one has 
$\pi_{m,n} = s_{\alpha_1}\cdot \pi_{n,m}$. 
Since $0\leq (\pi_{n,m}+\rho, \alpha_1^\vee) = n-m < p$, one has 
$H^1(\pi_{m,n}) \simeq H^0(\pi_{n,m})$ (see \cite{RAG}, II.5.4), which proves (ii). 

\smallskip 
Suppose now that $p > m\geq n$. By (i) and Serre duality on $G/Q$, one has 
$$
H^0(\pi_{m,n}) \simeq H^{d-1}(\mu_{m,n}) \simeq H^d(-(m+d)\omega_1 + n\omega_d)^* = H^d(\tau \mu_{n,m})^* . 
$$
Now, using the automorphism $\tau$ of $(G,T)$, 
one deduces from (ii) that 
$$
H^d(\tau \mu_{n,m}))^* \simeq H^0(\tau \pi_{m,n})^*. 
$$
Since the latter is the Weyl module $V(\pi_{m,n})$, one obtains 
that $H^0(\pi_{m,n}) \simeq L(\pi_{m,n})$. This proves (iii). 
\end{proof} 

Our goal in the next subsection is to determine $H^d(\mu_p)$. Since for $d=2$ the $\SL_3$-modules 
$H^2(m,-n-2)$ have been described in \cite{Liu19a}, where it is proved in particular (see \cite{Liu19a}, Th.\,3) 
that $H^2(p,-p-2)$ is the Weyl module $V(0,p-2)$, which is simple, we will henceforth assume that $d\geq 3$. 

\subsection{Computation of $H^d(\mu_n)$ for $p\leq n \leq 2p-1$}\label{sec-Hd-simple} 
Suppose now that $d\geq 3$ and $n=p+f$ with $0\leq f \leq p-1$. 
For each weight $\nu$ which is dominant (\resp dominant with respect to $P$), denote by $L(\nu)$ (\resp $L_P(\nu)$) the irreducible $G$-module (\resp $P$-module) with highest weight $\nu$. Let 
\begin{equation}\label{def-lambda0}
\lambda_f = 
\begin{cases} 
f\omega_1 + (p-2-f)\omega_2 +(f+1) \omega_3 & \text{ if } f\leq p-2, 
\\
(p-1)\omega_1 + (p-2)\omega_3 + \omega_4 & \text{ if } f = p-1 
\end{cases} 
\end{equation}
(with the convention $\omega_4 = 0$ if $d=3$), and set $L_f = L(\lambda_f)$ and $N_f = L_P(\lambda_f)$. 

\begin{prop}\label{main}  
One has $H^{d-1}(\mu_n)\simeq L_f \simeq H^d(\mu_n)$. 
\end{prop}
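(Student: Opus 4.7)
By Proposition \ref{H0-H1}, proving the proposition reduces to showing $H^0(V_P(\pi_{n,n})) \simeq L_f \simeq H^1(V_P(\pi_{n,n}))$, where $\pi_{n,n} = -\omega_1 + n\omega_2$. My plan is to begin with the $P$-module structure of $V_P(\pi_{n,n})$: restricted to the Levi of $P$ (whose semisimple part is $\SL_d$), $V_P(\pi_{n,n})$ is, up to a central twist, the Weyl module $V(n e_1)$ for $\SL_d$, \ie essentially the symmetric power $S^n K^d$, which is reducible precisely because $p \leq n \leq 2p-1$. Using either the Frobenius inclusion $V^{[1]} \hookrightarrow S^p V$ or Jantzen's sum formula applied to $V(n e_1)$, I would extract a short exact sequence of $P$-modules
\[ 0 \to M \to V_P(\pi_{n,n}) \to L_P(\pi_{n,n}) \to 0, \]
and identify $N_f = L_P(\lambda_f)$ as the essential composition factor of the radical $M$.

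Next, I would take cohomology on $G/P$. Since $\pi_{n,n} + \rho = (n+1)\omega_2 + \omega_3 + \dots + \omega_d$ pairs to $0$ with $\alpha_1^\vee$, the weight $\pi_{n,n}$ lies on the $\alpha_1$-wall; hence $\chi(\pi_{n,n}) = 0$ as a virtual $G$-character, and combined with the vanishing $H^i(V_P(\pi_{n,n})) = 0$ for $i \neq 0, 1$ from \cite{Liu19b}, \S 2, this already yields the character identity $\ch H^0(V_P(\pi_{n,n})) = \ch H^1(V_P(\pi_{n,n}))$. A Kempf-style vanishing using the same wall argument gives $H^i(G/P, L_P(\pi_{n,n})) = 0$ for every $i$, so the long exact sequence collapses to $H^i(V_P(\pi_{n,n})) \simeq H^i(G/P, M)$. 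Verifying that all composition factors of $M$ besides $N_f$ also land on walls (hence contribute nothing in $H^0$ and $H^1$), only $N_f$ survives; a Bott-style translation then identifies $H^0(G/P, N_f) \simeq L_f$, giving $H^{d-1}(\mu_n) \simeq L_f$.

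Finally, the passage from $H^{d-1}(\mu_n) \simeq L_f$ to $H^d(\mu_n) \simeq L_f$ is immediate from \eqref{symdiag}: since $-w_0 = \tau$ on $X(T)$ for $G = \SL_{d+1}$ and $\tau$ is involutive,
\[ H^d(\mu_n) \simeq H^{d-1}(\tau \mu_n)^* \simeq \bigl({}^\tau H^{d-1}(\mu_n)\bigr)^* \simeq L(\tau\lambda_f)^* = L(-w_0 \tau \lambda_f) = L(\lambda_f) = L_f. \]
The principal obstacle of the plan is the first stage: producing the correct short exact sequence of $P$-modules, isolating $N_f$ as the unique ``live'' composition factor of the radical, and checking cohomological vanishing for the remaining factors. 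This requires careful Jantzen-type bookkeeping of the affine reflections of $\pi_{n,n}$ and tracking which resulting weights land on walls for $G/P$-cohomology.
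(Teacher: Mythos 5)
Your reduction via Proposition \ref{H0-H1}, and your final step deducing $H^d(\mu_n)\simeq L_f$ from $H^{d-1}(\mu_n)\simeq L_f$ using \eqref{symdiag} and $\tau$, are sound; and your plan to establish the length-two $P$-module structure of $V_P(\pi_n)$, $\pi_n=-\omega_1+n\omega_2$ (radical $N_f$, simple head $C=L_P(\pi_n)$), is essentially Doty's result, which the paper simply cites. The problem is in the cohomological core. Your claim that $H^i(G/P,L_P(\pi_n))=0$ for all $i$ ``by the same wall argument'' is false: the singularity of $\pi_n$ gives $H^i(\pi_n)=H^i\bigl(G/P,H^0_P(\pi_n)\bigr)=0$, but says nothing about the \emph{simple} $P$-module $L_P(\pi_n)$. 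Indeed, applying $H^0$ to the second Doty sequence \eqref{HP0-N0} gives $H^0(C)=0$ and $H^i(C)\simeq H^{i-1}(N_f)$, and at the end of the paper's argument $H^1(G/P,C)\simeq H^0(G/P,N_f)\simeq L_f\neq 0$. Consequently your collapse $H^1(V_P(\pi_n))\simeq H^1(G/P,\mathrm{radical})$ is also wrong: the true situation is the reverse, namely $H^1(N_f)=0$ while $H^1(V_P(\pi_n))\simeq H^1(C)\simeq H^0(N_f)$, i.e. the class that becomes $H^d(\mu_n)$ comes from the simple head, not from the radical.

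The second gap is the assertion $H^0(G/P,N_f)\simeq L_f$ ``by a Bott-style translation.'' Since $N_f=L_P(\lambda_f)$ is a simple $P$-module and not a line bundle, no Bott or translation principle applies; left exactness only yields an embedding $H^0(G/P,N_f)\hookrightarrow H^0(\lambda_f)$, which a priori could be zero or strictly larger than $L_f$. The paper needs three further ingredients precisely at this point: (a) the periodicity $H^i(N_f)\simeq H^{i+2}(N_f)$ for $i\geq 1$ coming from the two Doty sequences, which forces $H^i(N_f)=0$ for $i\geq 1$ and hence $H^d(\mu_n)\simeq H^{d-1}(\mu_n)\simeq H^0(N_f)$; (b) the nonvanishing input from \cite{Liu19b}, Cor.~4 (the weight $\lambda_f$ has multiplicity one in $H^d(\mu_n)$), which your plan never invokes and without which $H^0(N_f)=0$ cannot be excluded; (c) the $\tau$-duality squeeze: $H^d(\mu_n)\simeq H^{d-1}(\tau\mu_n)^*$ is a quotient of $H^0(\tau\lambda_f)^*\simeq V(\lambda_f)$ while also being a submodule of $H^0(\lambda_f)$, so, being nonzero, it is the image of a nonzero map $V(\lambda_f)\to H^0(\lambda_f)$ and therefore equals $L_f$. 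Your Euler-characteristic observation that $\ch H^0(V_P(\pi_n))=\ch H^1(V_P(\pi_n))$ is correct but does not substitute for these steps.
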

\begin{rem}
  This generalizes Corollary 3 in \cite{Liu19b}, which was the case \( f=0 \). Note that if \( f>0 \) and
\( d > p-f+1 \) then \( \mu_{p+f} \) does not belong to the closure of the
facet containing \( \mu_{p} \). Indeed, denoting by \( \alpha_1, \dots,
\alpha_d \) the simple roots and setting \( \beta = \alpha_1 + \cdots
+ \alpha_{p-f+1} \), one has \( \langle\mu_{p}+\rho, \beta^{\vee}\rangle = 2p-f+1 \) whereas
\( \langle\mu_{p+f}+\rho,\beta^{\vee}\rangle = 2p+1 \).  Hence the result for \( \mu_{p+f} \) cannot be deduced from the one for \(
\mu_{p}\) by applying
a functor of translation.
\end{rem}

\begin{proof} 
Set $\pi_n = -\omega_1 + n \omega_2$ and $M = V_P(\pi_n)$. 
According to Doty \cite{Do85}, \S\S 2.3--2.4, one has exact sequences of $P$-modules:  
\begin{equation}\label{N0-M}
\xymatrix{
0 \ar[r] & N_f \ar[r] & M \ar[r] & C \ar[r] & 0
} 
\end{equation} 
and 
\begin{equation}\label{HP0-N0}
\xymatrix{
0 \ar[r] & C \ar[r] & H^0_P(\pi_n) \ar[r] & N_f \ar[r] & 0
} 
\end{equation} 
where $C = L_P(\pi_n)$. Applying the functor $H^0$ to \eqref{HP0-N0} and using that: 
\begin{equation} 
H^i(H^0_P(\pi_n)) = H^i(\pi_n) = 0 
\end{equation} 
for all $i\geq 0$, one obtains $H^0(C) = 0$ and isomorphisms $H^i(C) \simeq H^{i-1}(N_f)$ for all $i\geq 1$. 
Taking this and Proposition \ref{H0-H1} into account and applying the functor $H^0$ to \eqref{N0-M}, one obtains 
isomorphisms: 
\begin{equation}\label{H0-M}
H^{d-1}(\mu_n) \simeq H^0(M) \simeq H^0(N_f) , 
\end{equation} 
an exact sequence: 
\begin{equation}\label{H0H1-M-a}
\xymatrix{
0\ar[r] & H^1(N_f) \ar[r] & H^d(\mu_n) \ar[r] & H^0(N_f) \ar[r] & H^2(N_f) \ar[r] & 0
}
\end{equation} 
and isomorphisms 
\begin{equation}\label{H1-N0}
H^i(N_f) \simeq H^{i+1}(C) \simeq H^{i+2}(N_f)
\end{equation} 
for $i\geq 1$. Since $H^i(N_f) = 0$ for $i > \vert R^+ \vert$, one obtains $H^i(N_f) = 0$ for all $i\geq 1$. Together with 
\eqref{H0H1-M-a} and \eqref{H0-M}, this gives isomorphisms: 
\begin{equation}\label{H0H1-M-b}
H^d(\mu_n)\simeq H^0(N_f) \simeq H^{d-1}(\mu_n).
\end{equation} 
On the other hand, by \cite{Liu19b}, Cor.~4, $\lambda_f$ has multiplicity $1$ in $H^d(\mu_n)$, which is therefore non-zero. 
Thus $H^d(\mu_n)\simeq H^{d-1}(\mu_n) \simeq H^0(N_f)$ is a non-zero submodule 
of $H^0(H^0_P(\lambda_f)) = H^0(\lambda_f)$. 

Now, using the automorphism $\tau$ of $(G,T)$, one obtains that $H^d(\mu_n) \simeq H^{d-1}(\tau\mu_n)^*$ is a quotient of $H^0(\tau\lambda_f)^* \simeq V(\lambda_f)$.  
Since any non-zero morphism 
$V(\lambda_f) \to H^0(\lambda_f)$ has image $L_f$ (see \cite{RAG} II.6.16 \emph{Remark}), then \eqref{H0H1-M-b} gives that 
$H^d(\mu_n)\simeq L_f \simeq H^{d-1}(\mu_n)$. 
\end{proof}

\begin{rem} \footnote{We are grateful to one of the referees for this remark.} 
Seitz has shown (\cite{Sei87}, Prop.~6.1) that if a simple
$\SL_{d+1}$-module $L(\mu)$ with \( \mu \) \( p \)-restricted has 
one-dimensional weight spaces then either $\mu$ is a fundamental weight $\omega_i$ or a multiple of $\omega_1$ or $\omega_d$, 
or $\mu = a \omega_i + (p-1-a) \omega_{i+1}$ for some $i \in \{1,\dots, d-1\}$ and 
$a\in \{0,\dots, p-1\}$. Since $\lambda_f$ is not in that list when $f > 0$, it follows that $H^d(\mu_{p+f}) \simeq L(\lambda_f)$ 
never has all its weight spaces of dimension $1$ when $f > 0$. This improves on Remark 3 (2) of \cite{Liu19b}. 
\end{rem} 

\subsection{Jantzen sum formula and consequences} 
Now, consider the case $f=0$, \ie $n=p$. Then $\lambda_0 = (p-2)\omega_2 + \omega_3$. 
We shall use Jantzen sum formula (\cite{RAG}, II.8.19) to express $\ch L_0$ in terms of Weyl characters. 
Set $r = \min(d,p)$. In addition to $\lambda_0$, define for $i = 1,\dots, r-2$ the dominant weights: \footnote{These are different from the weights $\lambda_f$ considered in section \ref{sec-Hd-simple}.}
\begin{equation} 
\lambda_i = i \omega_1 + (p-2-i)\omega_2 + \omega_{3+i} 
\end{equation} 
(with the convention $\omega_{d+1} = 0$) 
and set $L_i = L(\lambda_i)$ and $N_i = L_P(\lambda_i)$. In other words, if $d> p$ the sequence ends with 
$\lambda_{p-2} = (p-2)\omega_1 + \omega_{p+1}$ 
whilst if $d \leq p$ it ends with 
$$
\lambda_{d-2} = (d-2)\omega_1 + (p-d) \omega_2 + \omega_{d+1} = (d-2)\omega_1 + (p-d) \omega_2.
$$

\begin{lemma}\label{wdot-0} Consider $\SL_{n+1}$ for some $n\geq 2$ and for $k = 1,\dots,n$ consider the weight 
$\theta_k = \omega_1 - k \omega_k + (k-1)\omega_{k+1}$. Note that $\theta_1 = 0$. 
\begin{enumerate} 
\item[(i)] For $k\geq 2$, one has $s_k\cdot \theta_k = \theta_{k-1}$. 

\smallskip 
\item[(ii)] For $k\geq 2$, one has $\omega_1 - k \omega_k + k \omega_{k+1} = s_k\cdots s_2 \cdot \omega_{k+1}$. 
\end{enumerate} 
\end{lemma}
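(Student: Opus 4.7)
The plan is to establish both parts by a direct computation using the standard formula for the dot action, $s_i\cdot\mu = \mu - \langle\mu+\rho,\alpha_i^\vee\rangle\alpha_i$, together with $\alpha_i = -\omega_{i-1}+2\omega_i-\omega_{i+1}$ (with the convention $\omega_0=\omega_{n+1}=0$).

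For (i), one simply pairs and substitutes. Since $\langle\theta_k,\alpha_k^\vee\rangle = -k$, one has $\langle\theta_k+\rho,\alpha_k^\vee\rangle = 1-k$, and therefore
$$s_k\cdot\theta_k \;=\; \theta_k+(k-1)\alpha_k \;=\; \theta_k+(k-1)(-\omega_{k-1}+2\omega_k-\omega_{k+1}).$$
Collecting the coefficients of $\omega_{k-1},\omega_k,\omega_{k+1}$ produces $\omega_1-(k-1)\omega_{k-1}+(k-2)\omega_k$, which is $\theta_{k-1}$.

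For (ii), I would reduce to (i). Write $\pi_k:=s_ks_{k-1}\cdots s_2$. Since $\langle\omega_{k+1},\alpha_i^\vee\rangle = 0$ for every $i\in\{2,\ldots,k\}$, each reflection appearing in $\pi_k$ fixes $\omega_{k+1}$ under the linear action, so $\pi_k(\omega_{k+1})=\omega_{k+1}$ and hence
$$\pi_k\cdot\omega_{k+1} \;=\; \pi_k(\omega_{k+1}+\rho)-\rho \;=\; \omega_{k+1}+(\pi_k\cdot 0).$$
Since $\theta_1=0$, rewriting (i) as $\theta_k=s_k\cdot\theta_{k-1}$ and iterating gives
$$\pi_k\cdot 0 \;=\; s_k\cdot(s_{k-1}\cdots s_2\cdot\theta_1) \;=\;\cdots\;=\; s_k\cdot\theta_{k-1} \;=\; \theta_k,$$
and adding $\omega_{k+1}$ yields $\omega_1-k\omega_k+k\omega_{k+1}$, as required.

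The whole argument is routine and I expect no real obstacle: everything reduces to the formula for $s_i\cdot\mu$ and one recursion. The only minor points to watch are the convention $\omega_{n+1}=0$, which makes (ii) meaningful when $k=n$, and the fact that the letter $n$ here denotes the rank of $\SL_{n+1}$ rather than the integer $n$ of $\mu_n$ used earlier in the paper.
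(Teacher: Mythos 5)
Your proof is correct and is essentially the paper's own argument: compute $s_k\cdot\theta_k=\theta_k+(k-1)\alpha_k=\theta_{k-1}$, deduce $\theta_k=s_k\cdots s_2\cdot 0$ by iteration, and then use that $w\cdot\lambda=w\lambda+w\cdot 0=\lambda+w\cdot 0$ when $w$ fixes $\lambda$, applied to $w=s_k\cdots s_2$ and $\lambda=\omega_{k+1}$. The only difference is that you spell out the substitution $\alpha_k=-\omega_{k-1}+2\omega_k-\omega_{k+1}$ explicitly, which the paper leaves implicit.
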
 

\begin{proof} One has $s_k\cdot \theta_k = \theta_k + (k-1)\alpha_k = \theta_{k-1}$. Thus 
$s_2\cdots s_k \cdot \theta_k = \theta_1 = 0$, whence $\theta_k =  s_k\cdots s_2 \cdot 0$. Next, for any 
weight $\lambda$ and $w\in W$, one has $w\cdot \lambda = w\lambda + w\cdot 0$, which equals 
$\lambda + w\cdot 0$ if $w\lambda = \lambda$. Applying this to $w = s_k\cdots s_2$ and $\lambda= \omega_{k+1}$ 
gives assertion (ii).  
\end{proof} 

\smallskip For $\alpha\in R^+$ let $\alpha^\vee$ be the corresponding coroot. Then, for $m\in \ZZ$, let $s_{\alpha, mp}$ be the affine 
reflection defined for all $\lambda\in X(T)$ by $s_{m,\alpha}(\lambda) = \lambda - ((\lambda,\alpha^\vee) - m)\alpha$. Further, one sets 
$s_{\alpha,m}\cdot \lambda = s_{m,\alpha}(\lambda+\rho)-\rho$. 

For each Weyl module $V(\lambda)$, Jantzen has defined a decreasing filtration 
$V(\lambda) \supset V(\lambda)_1  \supset V(\lambda)_2  \supset  \cdots$ and one has the following character formula (\cite{RAG}, II.8.19): 
$$
\sum_{i\geq 1} \ch V(\lambda)_i = \sum_{\alpha\in R^+} \; \sum_{\substack{m\\ 0 < mp < (\lambda+\rho,\alpha^\vee) }} 
v_p(m p) \chi(s_{\alpha, mp} \cdot \lambda) 
$$
where $v_p$ denotes the $p$-adic valuation and $\chi$ is the Weyl character. 
Recall that $\chi(\mu)=0$ if $\mu$ is singular for 
the dot action of $W$, \ie if there exists $\alpha\in R^+$ such that $(\mu+\rho, \alpha^\vee) = 0$, and otherwise there exists 
a unique couple $(w, \mu^+)\in W \times X(T)^+$ such that $\mu = w\cdot \mu^+$ and then 
$\chi(\mu) = (-1)^{\ell(w)} \ch V(\mu^+)$. 
In particular, if the right-hand side, to which we shall refer as ``Jantzen's sum'' (relative to $\lambda$), equals $\ch L$ 
for some simple module $L$ (\resp equals $0$), then one has $\ch V(\lambda) = \ch L(\lambda) + \ch L$ 
(\resp $V(\lambda) = L(\lambda)$).

\begin{prop}\label{char} For $i=0,\dots, r-2$ one has the equality: 
\begin{equation}\label{sum}  
\sum_{\ell \geq 1} \ch V(\lambda_i)_\ell = \sum_{j = i+1}^{r-2} (-1)^{j -i-1} \ch V(\lambda_j)
\end{equation} 
and the exact sequences:  
\begin{equation}\label{struc-Vi}
\xymatrix{ 
0\ar[r] & L_i \ar[r] & H^0(\lambda_i) \ar[r] & L_{i+1} \ar[r]  & 0} 
\end{equation} 
and 
\begin{equation}\label{struc-VPi}
\xymatrix{ 
0\ar[r] & N_i \ar[r] & H_P^0(\lambda_i) \ar[r] & N_{i+1} \ar[r]  & 0} 
\end{equation} 
with the convention $L_{r-1} = 0 = N_{r-1}$.
Therefore, one has exact sequences:
\begin{equation}
  \label{eq:61711d3630c0f9c9}
  0\to L(\lambda_{0})\to H^{0}(\lambda_{0})\to
  H^{0}(\lambda_{1})\to\cdots\to H^{0}(\lambda_{r-2})\to 0
\end{equation}
and
\begin{equation}
  \label{eq:b94ca9b7d1fe86f8}
  0\to L_{P}(\lambda_{0})\to H_{P}^{0}(\lambda_{0})\to
  H_{P}^{0}(\lambda_{1})\to\cdots\to H_{P}^{0}(\lambda_{r-2})\to 0.
\end{equation}
\end{prop}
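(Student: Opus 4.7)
The plan is to first establish the character identity \eqref{sum} by an explicit case-analysis of Jantzen's sum formula, then deduce the short exact sequences \eqref{struc-Vi} and \eqref{struc-VPi} by downward induction on $i$, and finally splice them to obtain the long exact sequences.

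\emph{Step 1 (Jantzen's sum).} Write the positive roots of $\SL_{d+1}$ as $\alpha_{s,t} = \alpha_s + \cdots + \alpha_t$ for $1\leq s\leq t\leq d$, so that $(\omega_k,\alpha_{s,t}^\vee) = 1$ iff $s\leq k\leq t$, and $(\rho,\alpha_{s,t}^\vee) = t-s+1$. From $\lambda_i = i\omega_1 + (p-2-i)\omega_2 + \omega_{3+i}$ one computes $(\lambda_i+\rho,\alpha_{s,t}^\vee)$ case by case according to the position of $(s,t)$ relative to the support $\{1,2,3+i\}$ of $\lambda_i$. I enumerate the roots for which $(\lambda_i+\rho,\alpha^\vee) > p$, determine the admissible values $m$ with $0<mp<(\lambda_i+\rho,\alpha^\vee)$, and compute $s_{\alpha,mp}\cdot\lambda_i = \lambda_i - ((\lambda_i+\rho,\alpha^\vee)-mp)\alpha$. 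Using Lemma \ref{wdot-0}(ii) to recognise weights of the shape $\omega_1-k\omega_k+k\omega_{k+1}$ as $s_k\cdots s_2\cdot\omega_{k+1}$, each surviving summand identifies as $\pm\ch V(\lambda_j)$ for some $j\in\{i+1,\dots,r-2\}$ or is singular for the dot action (hence contributes zero). After the cancellations, what remains is exactly $\sum_{j=i+1}^{r-2}(-1)^{j-i-1}\ch V(\lambda_j)$.

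\emph{Step 2 (Exact sequences \eqref{struc-Vi}, downward induction on $i$).} For $i = r-2$ the right-hand side of \eqref{sum} is empty, so $V(\lambda_{r-2}) = L_{r-2} = H^0(\lambda_{r-2})$, which agrees with \eqref{struc-Vi} since $L_{r-1} = 0$. For the induction step, assume \eqref{struc-Vi} for every $j$ with $i < j \leq r-2$, so $\ch V(\lambda_j) = \ch L_j + \ch L_{j+1}$. Substituting into the right-hand side of \eqref{sum} produces a telescoping sum equal to $\ch L_{i+1}$, whence $\ch V(\lambda_i) = \ch L_i + \ch L_{i+1}$. The quotient $H^0(\lambda_i)/L_i$ thus has character $\ch L_{i+1}$; since $\lambda_i - \lambda_{i+1} = \alpha_2+\alpha_3+\cdots+\alpha_{3+i}$ is a sum of positive roots, $\lambda_{i+1}$ is the unique maximal weight of this quotient. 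It is therefore a non-zero homomorphic image of $V(\lambda_{i+1})$, and matching characters forces equality with $L_{i+1}$. This gives \eqref{struc-Vi}.

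\emph{Step 3 (Parabolic version and long exact sequences).} For \eqref{struc-VPi} the same scheme applies at the Levi level of $P$: Jantzen's sum formula applies to the Weyl modules $V_P(\lambda_i)$, the relevant positive roots are a proper subset of those of $G$ (giving a simpler but analogous computation), and the downward induction runs unchanged. Finally, splicing \eqref{struc-Vi} (resp.\ \eqref{struc-VPi}) for consecutive values $i=0,1,\dots,r-2$ along the composed maps $H^0(\lambda_i)\twoheadrightarrow L_{i+1}\hookrightarrow H^0(\lambda_{i+1})$ (resp.\ the parabolic analogue) yields the exactness of \eqref{eq:61711d3630c0f9c9} and \eqref{eq:b94ca9b7d1fe86f8}. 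The main obstacle is Step 1: the Jantzen sum is sensitive to whether $d\leq p$ or $d > p$, since in the latter case ``long'' roots $\alpha_{s,t}$ with $s>3+i$ can still satisfy $(\rho,\alpha^\vee)>p$ and contribute additional terms that must ultimately cancel in pairs or produce weights singular for the dot action; careful sign tracking via Lemma \ref{wdot-0} is essential.
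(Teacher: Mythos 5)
Your Steps 2 and 3 are sound and essentially reproduce the paper's own deduction: once the identity \eqref{sum} and its Levi analogue are available, the downward induction (with base case $i=r-2$, where the Jantzen sum is empty), the telescoping of the right-hand side to $\ch L_{i+1}$, the identification of $H^0(\lambda_i)/L_i$ with $L_{i+1}$ by characters, and the splicing into the long exact sequences are exactly the arguments used in the paper, and they are correct as you state them.

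The genuine gap is Step 1, which is where essentially all of the work in this proposition lies. You announce that you will enumerate the roots $\alpha_{j,k}=\alpha_j+\cdots+\alpha_k$ with $(\lambda_i+\rho,\alpha_{j,k}^\vee)>mp$, compute the reflected weights, and that each surviving summand ``identifies as $\pm\ch V(\lambda_j)$ or is singular'', but none of this is verified: you are asserting the conclusion of the computation rather than performing it. The content of \eqref{sum} is precisely the claim that the only non-singular terms are $s_{\alpha_{2,i+2+t},p}\cdot\lambda_i$ for $t=1,\dots,r-i-2$ (all with $m=1$, hence coefficient $v_p(p)=1$), and that via Lemma \ref{wdot-0} these equal $s_{i+2+t}\cdots s_{i+4}\cdot\lambda_{i+t}$, producing the sign $(-1)^{t-1}$. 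Showing that every other pair $(\alpha_{j,k},m)$ yields a weight singular for the dot action requires a nontrivial case analysis: roots not meeting the support of $\lambda_i$; roots through $\omega_{i+3}$, with the boundary subcases $j=i+3$ and $k=i+3$; the roots $\alpha_{2,k}$ with $t=k-i-2=p$; and especially the roots $\alpha_{1,k}$ with $k>i+3$, where one must show that non-singularity forces $t=k-i-2>p$ and then $t=p+i+1$, which again gives $(\nu_m+\rho,\alpha_{1,k}^\vee)=0$. Your remark that the unwanted terms ``must ultimately cancel in pairs or produce singular weights'' is also off the mark: no pairwise cancellation occurs at all; every discarded term vanishes individually because the corresponding weight is dot-singular, and if one had to rely on cancellations the sign bookkeeping would be a genuinely different (and unverified) argument. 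Without this case analysis the identity \eqref{sum}, and hence \eqref{struc-Vi}, \eqref{struc-VPi} and the long exact sequences, remain unproved; the parabolic version is fine once one notes, as you do, that it follows from the same analysis restricted to the roots of the Levi subgroup of $P$.
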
 

\begin{proof} Fix $i \in \{0,\dots, r-2\}$ and 
for $1\leq j \leq k \leq d$, set $\alpha_{j,k} = \alpha_j + \cdots + \alpha_k$ and 
$c_{i,j,k} = (\lambda_i +\rho, \alpha_{j,k}^\vee )$. We are going to prove that all terms in Jantzen's sum (for both $G$ and $P$) 
are zero, except the ones given in \eqref{sum}. Fix $j \leq k$ such that $c_{i,j,k} > p$, let $m\in \NN^*$ such that $mp < c_{i,j,k}$ and set 
$t = c_{i,j,k} - mp$ and $\nu_m = s_{\alpha_{j,k}, mp}\cdot \lambda_i = \lambda_i - t \alpha_{j,k}$. 
There are four cases to consider. 

\medskip 
\underline{\emph{Case} 1}: $c_{i,j,k} = k-j+1$ (this occurs only $j > i+3$ and $k \geq j+p$). 
Then the expression of $\nu_m$ in terms of the fundamental weights contains 
the ``sequence'' $-t\omega_j - t\omega_k$, the coefficients of the $\omega_\ell$ for $j< \ell < k$ being $0$,  
and hence $\nu_m + \rho$ is orthogonal to both $\alpha_{j,j+t-1}^\vee$ and $\alpha_{k-t+1,k}^\vee$. Therefore 
$\nu_m$ gives no contribution to Jantzen's sum, neither for $G$ nor for $P$. 

\medskip 
\underline{\emph{Case} 2}: $c_{i,j,k} = k-j+2$ (this occurs only if $j\leq i+3 \leq k$, including the case $k=i+3$, $j=2$ and $i = p-2$). 
Assume first that $j < i+3 < k$. 
Then the expression of $\nu_m$ in terms of the fundamental weights contains 
the ``sequence'' $-t\omega_j + \omega_{i+3}- t\omega_k$, the coefficients of the $\omega_\ell$ for $j< \ell < k$ and $\ell\not= i+3$ 
being $0$. 

For $s = 0,\dots, i+2-j, i+3-j, \dots, k-j-1$, $(\nu_m+\rho, \alpha_{j,j+s}^\vee)$ takes all the values 
from $1-t$ to $k-j+1-t$, except $i-j+4-t$. Since 
$1-t \leq 0$ and $k-j+1-t > 0$ (since $t \leq k-j+2-p \leq k-j$), the value $0$ is obtained unless $t = i-j+4$. 

Similarly, for $s=0,\dots,k-i-4,k-i-3,\dots,k-j-1$, $(\nu_m+\rho, \alpha_{k-s,k}^\vee)$ takes all the values from 
$1-t$ to $k-j+1-t$, except $k-i-2-t$. Hence 
the value $0$ is obtained, unless  $t = k- i -2$. This shows that $\nu_m+\rho$ is singular, except possibly if 
$t = i-j+4 = k - i-2$. But in this case one has $2t = k-j+2$ and hence:  
$$
(\nu_m + \rho, \alpha_{j,k}^\vee) = k-j + 2(1-t) = 0.
$$
Consider now the ``boundary'' cases $j=i+3$ or $k = i+3$. 
If $j=i+3$ then, since $k-t+1 > j$, one has $k-t+1 > i+3$ and hence $(\nu_m + \rho, \alpha_{k-t+1,k}^\vee) = 0$. 
If $ k = i+3$, then $j + t-1 < k = i+3$ and hence $(\nu_m + \rho, \alpha_{j,j+t-1}^\vee) = 0$. 
Thus, in any case $\nu_m$ gives no contribution to Jantzen's sum, neither for $G$ nor for $P$. 

To close this case, note that  since $i+3 \leq p +1 \leq k - j +2$, the case $k = i+3$ can occur only if $j \leq 2$, in which case $c_{i,j,k} = k-j+2$ implies $i = p-2$. 

\medskip 
\underline{\emph{Case} 3}: $j = 2$ and $c_{i,j,k} = k+p-2-i$ and $i < p-2$. (Note that   $c_{i,2,i+2} = p-1$ hence the hypothesis $c_{i,2,k} > p$ implies 
$k\geq i+3$.) 
Then one has $\nu_m = (i+t)\omega_1 + (p-2-i-t)\omega_2 + \omega_{i+3} -t \omega_k + t\omega_{k+1}$. 

For $s=0,\dots,i,i+1,\dots,k-3$, $(\nu_m+\rho,\alpha_{2,2+s}^\vee)$ takes all the values from 
$p-i-1-t$ to $p+k-i-3-t$, except $p-t$. 
Since the last value taken is $\geq p-1> 0$, the value $0$ is obtained except if $t = p$ or if the initial value $p-i-1-t$ is $>0$, 
\ie $t \leq p-i-2$. 

Similarly, for $s= 0, \dots, k-i-4, k-i-3,\dots, k-3$, $(\nu_m+\rho,\alpha_{k-s,k}^\vee)$ takes all the values from 
$1-t$ to $k-1-t$, except $k-i-2-t$. 
Moreover one has $1-t \leq 0 < k-1-t$, hence the value $0$ occurs unless  $t = k-i-2$. 

\smallskip 
Thus, $\nu_m+\rho$ is singular except possibly if $t = k-i-2$ belongs to $\{1,\dots, p-i-2\}$ or if $t = k-i-2 = p$. In the latter case, one 
has $2t = p+k-i-2$ and hence 
$$
(\nu_m + \rho, \alpha_{2,k}^\vee) = k-2 + p-i-2t = 0.
$$
In the former case, one has $k = i+2+t$ with $t = 1,\dots, p-i-2$, whence $m=1$. In fact, since $k = i+2+t$ is $\leq d$, we have 
$t \in \{1,\dots, r-i-2\}$, recalling that $r = \min(p,d)$. For $t = 1,\dots, r-i-2$, set 
$$
\theta'_t = s_{\alpha_{2,i+2+t},p} \cdot \lambda_i = (i+t)\omega_1 + (p-2-i-t)\omega_2 + \omega_{i+3} -t \omega_{i+2+t} + 
t\omega_{i+3+t}.
$$
Using Lemma \ref{wdot-0} with a shift of $i+2$ in the indices, one obtains that $\theta'_1 = \lambda_{i+1}$ and that  
$\theta'_t = s_{i+2+t}\cdots s_{i+4}\cdot \lambda_{i+t}$ for $t\geq 2$. 

\smallskip 
Denote by $G_P$ the Levi subgroup of $P$ containing $T$ and recall (\cite{RAG}, II.5.21) that 
$V_P(\lambda_i)$ is just the corresponding Weyl module for $G_P$ on which the unipotent radical of $P$ acts trivially. 
Therefore, applying Jantzen's sum formula for $G_P$, one already obtains the equality: 
\begin{equation}\label{sum-P}  
\sum_{\ell \geq 1} \ch V_P(\lambda_i)_\ell = \sum_{t = 1}^{r-i-2} (-1)^{t -1} \ch V_P(\lambda_{i+t}). 
\end{equation} 
To prove the analogous equality for $G$ we must consider the last case, where $j=1$.

\medskip 
\underline{\emph{Case} 4}: $j=1$. Note that the assumption $c_{i,1,k} > p$ implies $k \geq 3$. If $k \leq i+3$ then 
$c_{i,1,k} = p + k-2 + \delta_{k,i+3}$ is $\leq 2p$ (since $i\leq p-2$), hence $m=1$ and $t = k-2 + \delta_{k,i+3}$ and the expression 
of $\nu_m$ 
in terms of the fundamental weights contains 
the ``sequence'' $(p-i-2)\omega_2 - (k-2)\omega_k$, the coefficients of the $\omega_\ell$ for $2 < \ell < k$  
being $0$. Then $(\nu_m+\rho, \alpha_{3,k}^\vee) = 0$ hence $\nu_m$ gives no contribution to Jantzen's sum. 

Suppose now that $k > i+3$. Then $c_{i,1,k} = k-1+p$ and 
$$
\nu_m = (i-t)\omega_1 + (p-2-i)\omega_2 + \omega_{i+3} - t \omega_k + t\omega_{k+1}.
$$
For $s = 0,\dots, k-i-4,k-i-3,\dots,k-3$,  $(\nu_m+\rho,\alpha_{k-s,k}^\vee)$ takes all the values from 
$1-t$ to $k-1-t$, except $k-i-2-t$. 
Moreover one has $1-t \leq 0 \leq k-1-t$, hence the value $0$ occurs unless  $t = k-i-2$. 

\smallskip 
Let us assume henceforth that $t = k-i-2$. Then $t > p$, for otherwise one would have $k-i-2 \leq p$ hence 
$k \leq 2p$ (since $i\leq p-2$) whence $m\leq 2$; but $m=1$ gives $k-i-2 = t = c_{i,1,k} - p = k-1$, a contradiction, 
whereas $m=2$ gives $k-i-2 = t = c_{i,1,k} - 2p = k-p-1$, a contradiction too, since $i\leq p-2$. 

Now, for $s= 0,\dots,i,i+1,\dots,k-3$, $(\nu_m+\rho,\alpha_{1,2+s}^\vee)$ takes all the values from 
$p-t$ to $p+k-2-t$, except $p+i+1-t$. 
Moreover since $p < t \leq k-1$, the initial term is $< 0$ and the final term $>0$, hence the value $0$ occurs unless  $t = p+i+1$. 
Now, if $t = k-i-2 = p+i+1$ then $2t = p+k-1$ and hence 
$$
(\nu_m + \rho, \alpha_{1,k}^\vee) = p+k-1 - 2t = 0.
$$
Thus, in any case $\nu_m$ gives no contribution to Jantzen's sum. This proves \eqref{sum}. 

\medskip 
It follows from \eqref{sum} that $H^0(\lambda_{r-2}) = L_{r-2}$; then for $\lambda_{r-3}$ the Jantzen sum equals 
$\ch L_{r-2}$ hence $\ch H^0(\lambda_{r-3} ) = \ch L_{r-3}+ \ch L_{r-2}$. By 
decreasing induction one obtains that $\ch H^0(\lambda_{i}) = \ch L_{i}+ \ch L_{i+1}$ for $i = r-3,\dots, 0$, whence 
the exact sequences \eqref{struc-Vi}. Similarly, 
using \eqref{sum-P} one obtains that $H^0_P(\lambda_{r-2}) = N_{r-2}$ and 
$\ch H^0_P(\lambda_{i}) = \ch N_{i}+ \ch N_{i+1}$ for $i = r-3,\dots, 0$, whence the exact sequences \eqref{struc-VPi}. 
This completes the proof of Proposition \ref{char}. 
\end{proof} 

Let us derive the following corollary (which is not used in the sequel). 

\begin{cor}\label{H0-Ni}  
For $i = 0,\dots, r-2$, one has $H^0(N_i) = L_i$ and $H^j(N_i)$ $= 0$ for $j>0$. 
\end{cor}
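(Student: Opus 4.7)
My approach is a descending induction on $i$, from $i=r-2$ down to $i=0$, using the short exact sequence \eqref{struc-VPi} and the $H^{0}$-acyclicity of its middle term.

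For the base case, the convention $N_{r-1}=0$ collapses \eqref{struc-VPi} into an isomorphism $N_{r-2}\simeq H_{P}^{0}(\lambda_{r-2})$. The spectral sequence for the composition $\operatorname{ind}_{B}^{G}=\operatorname{ind}_{P}^{G}\circ\operatorname{ind}_{B}^{P}$, together with Kempf vanishing on the Levi of $P$ (applicable because $\lambda_{r-2}$ is dominant for it), yields $H^{j}(H_{P}^{0}(\lambda_{r-2}))\simeq H^{j}(\lambda_{r-2})$. Kempf vanishing for $G$ makes this zero for $j>0$, and Proposition \ref{char} (whose Jantzen sum is empty at $i=r-2$) gives $H^{0}(\lambda_{r-2})=L_{r-2}$.

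For the inductive step, assume the conclusion for $i+1$. Apply $H^{*}$ to \eqref{struc-VPi}. The same composite-functor argument shows $H_{P}^{0}(\lambda_{i})$ is $H^{0}$-acyclic, and the inductive hypothesis $H^{0}(N_{i+1})\simeq L_{i+1}$, $H^{j\geq 1}(N_{i+1})=0$ reduces the long exact sequence to $H^{j}(N_{i})\simeq H^{j-1}(N_{i+1})=0$ for $j\geq 2$, together with
\begin{equation*}
0\to H^{0}(N_{i})\to H^{0}(\lambda_{i})\xrightarrow{\phi} L_{i+1}\to H^{1}(N_{i})\to 0.
\end{equation*}
Since $\operatorname{Hom}_{G}(H^{0}(\lambda_{i}),L_{i+1})$ is one-dimensional (spanned by the canonical quotient from \eqref{struc-Vi}, whose kernel is $L_{i}$), it suffices to show $\phi\neq 0$: then $\ker\phi=L_{i}$ forces $H^{0}(N_{i})\simeq L_{i}$ and $H^{1}(N_{i})=0$, completing the induction.

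The heart of the proof is thus the nonvanishing of $\phi$. My plan here is to splice \eqref{struc-VPi} for $k=i,\dots,r-2$ into the $H^{0}$-acyclic resolution
\begin{equation*}
0\to N_{i}\to H_{P}^{0}(\lambda_{i})\to H_{P}^{0}(\lambda_{i+1})\to\cdots\to H_{P}^{0}(\lambda_{r-2})\to 0
\end{equation*}
of $N_{i}$, apply $H^{0}$ to obtain a complex $C^{\bullet}$ of $G$-modules whose cohomology is $H^{*}(N_{i})$, and compare $C^{\bullet}$ with the exact sequence \eqref{eq:61711d3630c0f9c9}. The inductive hypothesis applied to the truncation $C^{\bullet}_{\geq 1}$ (which is the corresponding complex for $N_{i+1}$) identifies its higher differentials, up to nonzero scalars, with those of \eqref{eq:61711d3630c0f9c9}. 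The main obstacle is then the leading differential $d_{i}\colon H^{0}(\lambda_{i})\to H^{0}(\lambda_{i+1})$; tracing through Frobenius reciprocity, $d_{i}$ factors as $H^{0}(\iota)\circ\phi$ with $H^{0}(\iota)$ injective, so $d_{i}\neq 0$ is equivalent to $\phi\neq 0$. I would close this last point by a direct Frobenius-reciprocity analysis, showing that the $B$-map $H^{0}(\lambda_{i})\xrightarrow{\operatorname{ev}} H_{P}^{0}(\lambda_{i})\twoheadrightarrow N_{i+1}$ corresponding to $\phi$ is nonzero because the image of the counit $\operatorname{ev}$ is not contained in the socle $N_{i}\subset H_{P}^{0}(\lambda_{i})$.
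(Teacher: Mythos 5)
Your strategy is genuinely different from the paper's, and it has a gap at its decisive point. The paper runs an \emph{increasing} induction on $i$, anchored at $i=0$: the facts $H^0(N_0)\simeq L_0$ and $H^j(N_0)=0$ for $j\geq 1$ were already obtained in the proof of Proposition \ref{main}, and applying $H^0$ to \eqref{struc-VPi} propagates the vanishing upwards via the isomorphisms $H^j(N_{i+1})\simeq H^{j+1}(N_i)$, after which $H^0(N_i)\simeq L_i$ follows from \eqref{struc-Vi} by a character count; no nonvanishing of any induced map ever has to be proved. Your \emph{descending} induction from $i=r-2$ is correct in outline (the base case and the long-exact-sequence bookkeeping are fine), but it shifts all the difficulty onto the claim $\phi\neq 0$, and that claim is exactly what you do not prove: the long exact sequence alone is equally consistent with $\phi=0$, which would give $H^0(N_i)\simeq H^0(\lambda_i)$ and $H^1(N_i)\simeq L_{i+1}$ (same Euler characteristic), so a genuine argument is indispensable here. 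Your last sentence merely restates the needed fact --- that the image of the counit $\mathrm{ev}\colon H^0(\lambda_i)\vert_P\to H^0_P(\lambda_i)$ is not contained in $N_i$ --- without justification, and the detour through the spliced complex and \eqref{eq:61711d3630c0f9c9} gains nothing: as you observe yourself, the nonvanishing of the leading differential is equivalent to $\phi\neq 0$, so the question has only been moved, not answered.

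The gap is fillable along the lines you sketch. For $i\leq r-3$ any $B$-module homomorphism $N_i\to\lambda_i$ is zero: such a map is a $U$-invariant functional of weight $-\lambda_i$ on $N_i$, whereas the $U$-invariants of $N_i^*$ form the highest weight line, of weight $-w_P\lambda_i\neq -\lambda_i$ (the restriction of $\lambda_i$ to the Levi subgroup of $P$ is nontrivial, since the coefficient of $\omega_2$ is $p-2-i\geq 1$). On the other hand, the composite of $\mathrm{ev}$ with the counit $H^0_P(\lambda_i)\vert_B\to\lambda_i$ is the evaluation map for $\mathrm{ind}_B^G$, which is nonzero. Hence the image of $\mathrm{ev}$ cannot lie in $N_i$, so $\pi\circ\mathrm{ev}\neq 0$, where $\pi\colon H^0_P(\lambda_i)\to N_{i+1}$ is the projection, and Frobenius reciprocity then gives $\phi\neq 0$. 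With this argument supplied your proof goes through; as written, however, the crucial step is asserted rather than proved, while the paper's increasing induction avoids the issue entirely.
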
 

\begin{proof} Applying the functor $H^0$ to each exact sequence \eqref{struc-VPi} 
gives an exact sequence:  
$$
\xymatrix{ 
0\ar[r] & H^0(N_i) \ar[r] & H^0(\lambda_i) \ar[r] & H^0(N_{i+1}) \ar[r]  & H^1(N_i) \ar[r] & 0} 
$$
and isomorphisms $H^{j}(N_{i+1}) \simeq H^{j+1}(N_i)$ for $j \geq 1$. Since $H^j(N_0) = 0$ for $j\geq 1$, one obtains 
$H^j(N_i) = 0$ for all $i\geq 0$ and $j\geq 1$, hence the previous exact sequence becomes: 
\begin{equation} 
\xymatrix{ 
0\ar[r] & H^0(N_i) \ar[r] & H^0(\lambda_i) \ar[r] & H^0(N_{i+1}) \ar[r]  &  0} . 
\end{equation} 
Since $H^0(N_0)\simeq L_0$, the exact sequences \eqref{struc-Vi} then imply, by induction on $i$, that 
$H^0(N_i)\simeq L_i$ for $i=0,\dots, r-2$. 
\end{proof}

On the other hand, in \cite{Liu19b}, Cor.\,3 and 4, the first author
proved, using a result of Suprunenko pointed out by one of the referees, that 
the dominant weights of $L(\lambda_{0})$ (resp. of \( L(\lambda'_{0})=L((p-2)\omega_{1}+\omega_{2}) \)) are exactly the dominant weights 
$\leq \lambda_0$ (resp. \( \lambda'_{0} \)), each occuring with multiplicity one. 
Let us now switch to representations of $\GL_{d+1}$ and identify each
$\lambda_i$ (resp. \( \lambda'_i = (p-2-i)\omega_1 + \omega_{2+i}  \)) with the partition 
$(p-1,p-1-i,1^{i+1})$ (resp. \( (p-1-i, 1^{i+1}) \)). Recall that (see for example \cite{McD}, Chap.\,I) for a dominant weight $\lambda$ of $\GL_{d+1}$, identified with a partition with at most $d+1$ parts, 
the Weyl character $\ch V(\lambda)$ (\resp the orbit sum $\sum_{\nu\in W\lambda} e^\nu$) corresponds to the Schur function 
$S_\lambda$ (\resp the monomial symmetric function $m_\lambda$). 
Let us assume that $d \geq 2p-2$, then the dominant weights smaller than $\lambda_0$ correspond to all partitions of $2p-1$ 
which are smaller than $(p-1,p-1,1)$ in the dominance order. Thus, one
deduces from Propositions  \ref{char} and \ref{main} (or Corollaries 3 and 4 
in \cite{Liu19b}) the following: 

\begin{cor}\label{alt-Schur-p} For each prime number $p$, one has the equalities: 
$$
\sum_{\lambda \leq (p-1,p-1,1) } m_\lambda = \sum_{i=0}^{p-2} (-1)^i \, S_{ (p-1,p-1-i,1^{i+1} ) } 
$$
and
\begin{displaymath}
 \sum_{\lambda \leq (p-1,1) } m_\lambda = \sum_{i=0}^{p-2} (-1)^i \, S_{ (p-1-i,1^{i+1} ) }  
\end{displaymath}
where the sum on the left hand-side of the first equality
(resp. second equality) is taken over all partitions $\lambda$ of\,
$2p-1$ (resp. \( p \)) such that $\lambda \leq (p-1,p-1,1)$ 
 (resp. \( \lambda\leq (p-1,1) \)) in the dominance order. 
\end{cor}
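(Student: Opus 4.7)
\emph{Proof proposal.} My plan is to compute $\ch L(\lambda_0)$ in two different ways and equate them; the second identity will follow by the same strategy applied to $L(\lambda'_0)$. For the first expression I will invoke Proposition \ref{char}: under the hypothesis $d \geq 2p-2$ one has $r = \min(d,p) = p$, so the exact sequence \eqref{eq:61711d3630c0f9c9} runs from $H^0(\lambda_0)$ through $H^0(\lambda_{p-2})$. Taking the alternating sum of characters and switching to the $\GL_{d+1}$ viewpoint (with $\lambda_i \leftrightarrow (p-1, p-1-i, 1^{i+1})$) yields
\[
\ch L(\lambda_0) = \sum_{i=0}^{p-2} (-1)^i\, S_{(p-1,\, p-1-i,\, 1^{i+1})}.
\]

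For the second expression I will use the multiplicity-one statement recalled just before the corollary: by \cite{Liu19b}, Cor.\,3 and 4 together with Suprunenko's result, every dominant weight $\mu \leq \lambda_0$ occurs in $L(\lambda_0)$ with multiplicity exactly one, and these exhaust the dominant weights. Grouping $\ch L(\lambda_0)$ into $W$-orbit sums then gives
\[
\ch L(\lambda_0) = \sum_{\mu \leq (p-1,p-1,1)} m_\mu ,
\]
and the hypothesis $d+1 \geq 2p-1$ ensures that every partition of $2p-1$ dominated by $(p-1,p-1,1)$ does fit in $d+1$ parts, so the index set matches the one in the statement. Equating the two expressions proves the first identity.

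For the second identity I will repeat the argument with $\lambda'_0 = (p-2)\omega_1 + \omega_2$ in place of $\lambda_0$, drawing the alternating-sum expression from the $P$-analog \eqref{eq:b94ca9b7d1fe86f8} of Proposition \ref{char}. Restricting to the Levi subgroup of $P$ and viewing everything as $\GL_d$-modules, the $P$-module $N_i = L_P(\lambda_i)$ becomes the simple module of highest weight $\lambda'_i = (p-2-i)\omega_1 + \omega_{i+2} \leftrightarrow (p-1-i, 1^{i+1})$, while $H^0_P(\lambda_i)$ restricts to the corresponding Weyl module. The alternating sum then reads $\ch L(\lambda'_0) = \sum_{i=0}^{p-2} (-1)^i S_{(p-1-i,\, 1^{i+1})}$, and the parallel multiplicity-one statement for $L(\lambda'_0)$ (also part of \cite{Liu19b}, Cor.\,3) gives $\ch L(\lambda'_0) = \sum_{\mu \leq (p-1,1)} m_\mu$; equating produces the second identity.

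The main obstacle is not the calculation itself, which is formal once the two ingredients are in place, but rather the multiplicity-one assertion on the weight spaces of $L(\lambda_0)$ and $L(\lambda'_0)$, which is the nontrivial input from \cite{Liu19b} together with Suprunenko. A secondary verification is the identification of $N_i$, as a $\GL_d$-module, with the simple module of partition type $(p-1-i, 1^{i+1})$, which amounts to tracking the restriction of fundamental weights from $\SL_{d+1}$ to the Levi of $P$.
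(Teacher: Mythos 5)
Your proposal is correct and follows essentially the same route as the paper: the alternating-sum side comes from the exact sequences \eqref{eq:61711d3630c0f9c9} and \eqref{eq:b94ca9b7d1fe86f8} of Proposition \ref{char} (taking $d\geq 2p-2$ so that $r=p$ and all relevant partitions fit), while the monomial-sum side comes from the multiplicity-one description of the dominant weights of $L(\lambda_0)$ and $L(\lambda'_0)$ from \cite{Liu19b}, Cor.~3 and 4 via Suprunenko. Your Levi identification of $N_i$ and $H^0_P(\lambda_i)$ with the $\GL_d$-modules of partition type $(p-1-i,1^{i+1})$ is the intended reading (the $i\omega_1$-components of the $\lambda_i$ contribute a common central twist $(p-1)\epsilon_1$, so they cancel in the comparison), matching the paper's brief deduction.
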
 

\begin{rem} One may conjecture that the previous equalities hold for \emph{any} integer $n \geq 2$ (not only for prime numbers). 
\footnote{On August 23, 2019 (the day after the first version of this article was
  posted on arXiv), a proof of this conjecture for the first equality was kindly sent to us by Darij Grinberg, see \cite{Gri20}.} 
\end{rem} 

\medskip
{\bf Acknowledgements.} We thank one of the referees for several very useful comments, in particular for suggesting the second 
equality of Corollary \ref{alt-Schur-p}.

}

\begin{thebibliography}{McD95} 

\bibitem[Dot85]{Do85} Stephen R.\,Doty, The submodule structure of certain Weyl modules for groups of type $A_n$, 
J. Algebra {\bf 95} (1985), no. 2, 373-383. 

\bibitem[Gri20]{Gri20} Darij Grinberg, Petrie symmetric functions,
   arXiv:2004.11194. 

\bibitem[Jan03]{RAG} Jens Carsten Jantzen, Representations of algebraic groups (2nd ed.), Amer. Math. Soc, 2003. 

\bibitem[Liu19a]{Liu19a} Linyuan Liu, Cohomologie des fibr\'es en droites sur $\SL_3/B$ en caract\'eristique positive: deux filtrations et cons\'equences, submitted (see also arXiv:1903.08758). 

\bibitem[Liu19b]{Liu19b} Linyuan Liu, On the cohomology of line bundles over certain flag schemes, 
submitted [to this journal].  

\bibitem[McD95]{McD} Ian G.\,Macdonald, Symmetric Functions and Hall Polynomials (2nd ed.), Oxford Univ. Press, 1995. 

\bibitem[Sei87]{Sei87} Gary Seitz, The maximal subgroups of classical algebraic groups, Mem. Amer. Math. Soc. {\bf 67} (1987), no. 365. 

\end{thebibliography}
\end{document}